\theoremstyle{plain}
\newtheorem*{lemma}{Lemma}
\newtheorem*{proposition}{Proposition}
\newtheorem*{fact}{Fact}
\theoremstyle{definition}
\title[]{Addendum to: \\ ``An expansion of the Jones representation of genus 2 and the Torelli group''}
\author[Y. Kasahara]{Yasushi Kasahara}
\date{April 19, 2010}
\address{
Department of Mathematics \\
  Kochi University of Technology \\ Tosayamada,  Kami City, Kochi \\ 
  782-8502 Japan 
}
\subjclass[2000]{Primary 57N05; Secondary 20F38, 20C08,  20F40}
\email{kasahara.yasushi@kochi-tech.ac.jp}
\begin{document}
%
% Macros for this article
%
\newcommand{\Q}{\mathbb Q}
\newcommand{\C}{\mathbb C}
\newcommand{\D}{\mathcal{D}}
\newcommand{\g}{\gamma}
\newcommand{\G}{\Gamma}
\newcommand{\gr}{\mathcal{G}}
\newcommand{\Gd}{\gr \delta}
\newcommand{\HE}{\mathcal H}
\newcommand{\K}{\mathcal K}
\newcommand{\h}{\mathfrak h}
\newcommand{\I}{{\mathcal I}}
\renewcommand{\S}{\Sigma}
\newcommand{\sgn}{\operatorname{sgn}}
\newcommand{\rank}{\operatorname{rank}}
\newcommand{\Sp}{\operatorname{Sp}}
\newcommand{\PSp}{\operatorname{PSp}}
\newcommand{\GL}{\operatorname{GL}}
\newcommand{\ctr}{\operatorname{Center}}
\newcommand{\Int}{\operatorname{Int}}
\newcommand{\Homeo}{\operatorname{Homeo}}
\renewcommand{\L}{\mathcal L}
\newcommand{\M}{{\mathcal M}}
\newcommand{\N}{{\mathcal N}}
\newcommand{\p}{\psi}
\newcommand{\x}{\xi}
\newcommand{\z}{\zeta}
\newcommand{\Z}{{\mathbb Z}}
\newcommand{\Aut}{\operatorname{Aut}}
\newcommand{\End}{\operatorname{End}}
\newcommand{\tr}[1]{\operatorname{trace}{#1}}
\renewcommand{\hom}{\operatorname{Hom}}
\renewcommand{\ker}{\operatorname{Ker}}
\newcommand{\im}{\operatorname{Im}}
\newcommand{\var}[1]{\varphi^{(#1)}}
\newcommand{\IW}{{H(q, n)}}
\newcommand{\PSL}{\operatorname{PSL}}
\newcommand{\F}{\mathcal{F}}
\newcommand{\ppar}{\par\goodbreak\medskip} 
%
% End of the macros
%
\bibliographystyle{amsplain}
\begin{abstract}
 We observe that the determinant of the representation provides a little 
 restriction for the structure of the graded quotients $\gr_k^\Q{\F}$ introduced 
in both \cite{AGT} and \cite{JKTR} 
 that any one of them does not contain the trivial $1$-dimensional summand.
\end{abstract}

\maketitle

In our previous papers \cite{AGT} and \cite{JKTR}, we studied the Jones representation of genus $2$ 
$$\rho: \M_2 \to \GL(5, \Z[t,t^{-1}])$$
where $\M_2$ denotes the mapping class group of an closed oriented surface of genus $2$. 
We considered certain expansions of the involved parameter starting with the substitutions 
$t = -e^h$ in the former paper \cite{AGT}, and $t = e^h$ in the latter one  \cite{JKTR}, 
which induce two {\em different} descending central filtrations, both denoted by the same symbol
$\{ \F_k \}_{k \geq 1}$,  of the Torelli subgroup $\I_2 \subset \M_2$.
In each case, the associated graded quotients $\gr_k\F = \F_k/\F_{k+1}$
are free abelian groups of finite rank, and their scalar extensions $\gr^\Q_k\F = \gr_k\F \otimes \Q$
have natural structures of modules over $\Sp(4,\Q)$ and the symmetric group $\mathfrak{S}_6$ of 
degree $6$, respectively, and the direct sum $\bigoplus_{k \geq 1}{\gr^\Q_k\F}$ forms a graded Lie 
algebra. We determined,  in \cite{AGT} and \cite{JKTR}, respectively,
the Lie subalgebra generated by the homogeneous component of degree $1$. 
However, the structure of the graded quotients beyond this Lie subalgebra has remained unknown for 
the both cases. 
\par

In this note, we notify that the determinant of the representation $\rho$ provides a 
little restriction for the structure of each graded quotient for the both cases 
that it does not contain the trivial $1$-dimensional 
summand, which we had overlooked previously.
While our argument  is quite elementary, it is similar to that for the 
Morita trace for the images of the Johnson homomorphisms \cite{morita}. 
\ppar
From now on, we freely use the notations in our previous papers \cite{AGT} and \cite{JKTR}, which are 
common, as well as our argument below, except for the initial substitutions $t = -e^h$ and $t = e^h$, 
respectively. 
We begin our observation with:
\begin{fact}
 The determinant of the Jones representation $\rho$ lies in $\{ \pm 1 \}$.
\end{fact}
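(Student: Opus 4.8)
The plan is to exploit the fact that $\det\rho$ is a one-dimensional representation, hence a homomorphism into an \emph{abelian} group, and must therefore factor through the abelianization of $\M_2$. First I would record that the determinant defines a group homomorphism
$$\det\rho : \M_2 \longrightarrow \bigl(\Z[t,t^{-1}]\bigr)^\times,$$
and identify the target. The units of the Laurent polynomial ring $\Z[t,t^{-1}]$ are precisely the elements $\pm t^n$ with $n \in \Z$, so $(\Z[t,t^{-1}])^\times \cong \Z/2 \oplus \Z$ as an abelian group, the $\Z$-factor being generated by $t$ and hence torsion-free.

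Next, since the target is abelian, $\det\rho$ factors through the abelianization $H_1(\M_2;\Z) = \M_2/[\M_2,\M_2]$. Here I would invoke the classical computation that for the closed oriented surface of genus $2$ one has $H_1(\M_2;\Z) \cong \Z/10$, a \emph{finite} group. Consequently the image $\det\rho(\M_2)$ is a finite subgroup of $(\Z[t,t^{-1}])^\times$. To conclude, I would observe that a finite subgroup of $\Z/2 \oplus \Z$ is contained in the torsion subgroup, which is exactly $\{\pm 1\}$, since every element $\pm t^n$ with $n \neq 0$ has infinite order. Hence $\det\rho(\M_2) \subseteq \{\pm 1\}$, as asserted.

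I do not anticipate a genuine obstacle: the conclusion is essentially forced once the unit group $(\Z[t,t^{-1}])^\times$ and the finiteness of $H_1(\M_2;\Z)$ are in hand. The only step requiring care is the middle one, where the value $H_1(\M_2;\Z) \cong \Z/10$ should be quoted rather than reproved; I note that any weaker statement asserting that the abelianization is torsion would suffice equally well. Alternatively, one could bypass the abelianization entirely and compute $\det\rho$ directly on the standard Dehn twist generators of $\M_2$, which are few in number and fall into only two conjugacy classes, so that the determinant takes at most two values on generators and the verification reduces to a short explicit calculation.
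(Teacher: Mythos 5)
Your argument is correct, and it takes a genuinely different route from the paper. The paper justifies the Fact by appealing to Jones's construction (noting it holds for all representations coming from rectangular Young diagrams) and, for genus $2$, by offering a direct check on the explicit $5\times 5$ matrices; your closing alternative of evaluating $\det\rho$ on Dehn twist generators is essentially that second option. Your main argument instead rests on two structural inputs: the unit group $(\Z[t,t^{-1}])^\times=\{\pm t^n\}\cong \Z/2\oplus\Z$ (valid because $\Z$ is an integral domain, so a unit must be a monomial with unit coefficient), and the finiteness of $H_1(\M_2;\Z)\cong\Z/10$, which forces the image of the homomorphism $\det\rho$ to be a finite subgroup of $\Z/2\oplus\Z$ and hence to lie in the torsion part $\{\pm 1\}$. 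What this buys you: no computation and no inspection of Jones's construction are needed, and the argument applies verbatim to any representation over a Laurent polynomial ring (even in a fractional power of the parameter) of any group with finite abelianization --- in particular it would also cover the punctured-sphere cases the paper alludes to, since those mapping class groups likewise have finite abelianization. A further payoff is that your route subsumes the paper's next step: since $\det\rho$ factors through $\Z/10$ and the Torelli group $\I_2$ maps onto the $\Z/5$ subgroup, which admits no nontrivial homomorphism to $\{\pm 1\}$, the identity $\det\rho\equiv 1$ on $\I_2$ comes for free. What the paper's approach buys in exchange is independence from the specific target ring and a closer tie to the representation itself, but for the statement as given your argument is cleaner.
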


This fact follows from the construction by Jones \cite{jones}, and in fact 
holds for all the representations of the mapping class groups of punctured spheres 
constructed in \cite{jones} from rectangular Young diagrams. For the genus $2$ case, however,  one 
can verify the fact easily by using the explicit matrix form given in \cite{jones} or \cite{AGT}.
\par

It is well-known that the abelianization of $\M_2$ is $\Z/10\Z$, and one can easily see that 
the image of the Torelli subgroup $\I_2$ in the abelianization is $\Z/5\Z$. Therefore, the above fact 
immediately implies that the determinant of $\rho$ is identically $1$ on $\I_2$. Hence, for each $k \geq 1$,
we have
$$\det{\var{k}(\F_k)} \equiv 1 \mod{h^{k+1}}$$
On the other hand, for any $x \in \F_k$, $\var{k}(x)$ has the form
$$\var{k}(x) = I + h^k \cdot \Delta_k(x) \mod{R[k+1]}$$
where $I$ denotes the identity matrix, and $\Delta_k$ is a homomorphism of $\F_k$, with kernel $\F_{k+1}$,
into the additive group $M(5, \Q)$ of the matrix algebra of degree $5$ over $\Q$.
Considering $M(5,\Q)$ as an $\M_2$-module by $\varphi^{(0)} \otimes (\varphi^{(0)})^*$, $\Delta_k$ is 
also $\M_2$-equivariant.  We remark that the 
homomorphism $h^k \cdot \Delta_k$, with its target considered as $\gr_k{R} = R[k]/R[k+1]$, 
is denoted by $\delta_k$ in our previous papers. Essential for this note is the following:
\begin{lemma}
 For any $k \geq 1$ and $x \in \F_k$, 
 $$\det{\var{k}(x)} = 1 + h^{k} \cdot \tr{\Delta_k}(x) \mod{h^{k+1}}$$
where $\tr{}$ denotes the usual matrix trace.
\end{lemma}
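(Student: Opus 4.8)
The plan is to reduce the statement to an elementary expansion of the determinant of a perturbation of the identity matrix. By the displayed formula immediately preceding the statement, for $x \in \F_k$ we have $\var{k}(x) \equiv I + h^k \Delta_k(x) \pmod{h^{k+1}}$, where the entries of $\Delta_k(x)$ lie in $\Q$. Since $\det$ is a polynomial with integer coefficients in the matrix entries, two matrices congruent entrywise modulo $h^{k+1}$ have congruent determinants modulo $h^{k+1}$; hence I may replace $\var{k}(x)$ by the representative $I + h^k \Delta_k(x)$, and it suffices to compute $\det\bigl(I + h^k \Delta_k(x)\bigr)$ modulo $h^{k+1}$.

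Writing $A = \Delta_k(x) \in M(5,\Q)$ and treating $s = h^k$ as a formal scalar, I would invoke the standard expansion
$$\det(I + s A) = \sum_{j=0}^{5} s^{j}\, \sigma_j(A),$$
where $\sigma_j(A)$ is the sum of the $j \times j$ principal minors of $A$; in particular $\sigma_0(A) = 1$ and $\sigma_1(A) = \tr{A}$. Substituting $s = h^k$ yields
$$\det\var{k}(x) \equiv \sum_{j=0}^{5} h^{kj}\, \sigma_j\bigl(\Delta_k(x)\bigr) \pmod{h^{k+1}}.$$

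It then remains only to discard the higher-order terms by counting powers of $h$: each summand with $j \geq 2$ carries the factor $h^{kj}$, and because $k \geq 1$ we have $kj \geq 2k \geq k+1$, so every such term is $\equiv 0 \pmod{h^{k+1}}$. Only $j = 0$ and $j = 1$ survive, producing $1 + h^{k}\,\tr{\Delta_k(x)}$, which is the claim. I do not anticipate a genuine obstacle here: the entire content is the inequality $2k \geq k+1$ for $k \geq 1$, which is exactly what pushes all the higher principal-minor corrections out of range modulo $h^{k+1}$. One could instead cite the first-order Jacobi formula $\frac{d}{ds}\det(I + sA)\big|_{s=0} = \tr{A}$, but the principal-minor expansion keeps everything within the formal ring $R$ and makes the truncation completely transparent.
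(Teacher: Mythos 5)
Your proof is correct and is essentially the paper's own argument: both expand $\det\bigl(I + h^k\Delta_k(x)\bigr)$ and observe that every correction beyond the trace term carries a factor $h^{jk}$ with $j\geq 2$, hence dies modulo $h^{k+1}$ since $2k \geq k+1$. The paper organizes the expansion via the Leibniz permutation sum (each $\sigma \neq 1$ contributes at least two off-diagonal factors) rather than via principal minors, but this is only a difference in packaging, not in substance.
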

\begin{proof}
 Writing  $I + h^k \cdot \Delta_k(x) = (a_{i,j})$, consider the definition for determinant
 $$\det{\var{k}(x)} = \sum_{\sigma \in \mathfrak{S}_5}{\sgn{\sigma}\prod_{i=1}^5{a_{i,\sigma(i)}}}.$$
 It is easy to see that  each summand except for $\sigma = 1$ has at least two factors from the 
 off diagonal,   and hence is a multiple of $h^{2k}$, which vanishes modulo ${h^{k+1}}$. 
 Therefore, $\det{\var{k}(x)}$ is simply the 
 product of the diagonal entries, which is equal to the right-hand side in the lemma.
\end{proof}
\par

Therefore, we see that $\Delta_k({\F_k})$ satisfies $\tr{} \equiv 0$.
For each of the cases \cite{AGT} and \cite{JKTR}, the $\M_2$-module $M(5,\Q)$, 
which is identical to $\gr_kR$, is in fact an $\Sp(4,\Q)$-module and an $\mathfrak{S}_6$-module,
respectively, and its irreducible decomposition is  explicitly given by 
\begin{align*}
 & \Gamma_{0,2} \oplus \Gamma_{2,0} \oplus \Gamma_{0,0}  \tag*{\cite[(4.1)]{AGT}}\\
 \intertext{and} 
 & [6] \oplus [4,2] \oplus [2^3] \oplus [3,1^3],  \tag*{\cite[(3.4)]{JKTR}}
\end{align*}
respectively. 

Among the summands in each case, the trivial $1$-dimensional one, which corresponds to 
$\Gamma_{0,0}$ and $[6]$, respectively, can be identified with the center of $M(5,\Q)$, 
and therefore has {\em nonzero} trace. This shows:
\begin{proposition}
 The trivial $1$-dimensional summand does not appear in $\gr_k^\Q{\F}$ for {\em any} $k \geq 1$.
\end{proposition}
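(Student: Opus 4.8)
The plan is to combine the trace-vanishing recorded just above the statement with the $\M_2$-equivariance of $\Delta_k$, and then to read off the conclusion from the known irreducible decomposition of $M(5,\Q)$. First I would make precise the consequence of the Lemma: since $\det\rho$ is identically $1$ on $\I_2$, we have $\det\var{k}(\F_k) \equiv 1 \bmod h^{k+1}$, so the Lemma forces $\tr{\Delta_k(x)} = 0$ for every $x \in \F_k$. Hence the image of $\Delta_k$ is contained in the trace-zero subspace $\ker(\tr{}) \subset M(5,\Q)$.

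The key structural observation I would invoke is that the trace map $\tr{}: M(5,\Q) \to \Q$ is itself equivariant when $\Q$ carries the trivial action, being nothing but the canonical evaluation pairing $\var{0} \otimes (\var{0})^* \to \Q$. Consequently $\ker(\tr{})$ is a submodule, and $M(5,\Q) = \ker(\tr{}) \oplus \Q I$ is a decomposition of modules in which $\Q I$ is the center. Since the trivial $1$-dimensional summand---$\Gamma_{0,0}$ in the \cite{AGT} case and $[6]$ in the \cite{JKTR} case---is precisely this center and therefore has nonzero trace, it cannot lie in $\ker(\tr{})$. Thus $\ker(\tr{})$ coincides with the sum of the remaining nontrivial irreducibles, namely $\Gamma_{0,2} \oplus \Gamma_{2,0}$ and $[4,2] \oplus [2^3] \oplus [3,1^3]$, respectively, neither of which contains a trivial constituent.

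To finish, I would use that $\Delta_k$ has kernel exactly $\F_{k+1}$, so that after tensoring with $\Q$ it induces an injective module homomorphism $\gr_k^\Q\F \hookrightarrow \ker(\tr{})$. Because the relevant representation categories---rational representations of the reductive group $\Sp(4,\Q)$, and representations of the finite group $\mathfrak{S}_6$---are semisimple in characteristic zero, this embedded submodule is a direct sum of some of the nontrivial irreducibles occurring in $\ker(\tr{})$. In particular no trivial summand can occur in $\gr_k^\Q\F$, which is the assertion of the Proposition.

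The step requiring the most care, though it is not a deep obstacle, is the passage from ``the image of $\Delta_k$ avoids the scalar matrices'' to ``the abstract module $\gr_k^\Q\F$ has no trivial summand''. This is precisely where the multiplicity-one occurrence of the trivial representation in $M(5,\Q)$ enters: it is what guarantees that the unique trivial constituent coincides with the center and is therefore detected by the single linear invariant $\tr{}$. Were the trivial to appear with higher multiplicity, vanishing of the trace alone would not rule it out, and a finer invariant would be needed.
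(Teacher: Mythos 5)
Your argument is correct and is essentially the proof the paper gives: the determinant computation forces $\tr{\Delta_k} \equiv 0$ on $\F_k$, and since the unique trivial summand of $M(5,\Q)$ is the center and hence has nonzero trace, the embedded module $\gr_k^\Q\F$ cannot contain a trivial constituent. Your added remarks on the equivariance of the trace and the role of multiplicity one merely make explicit what the paper leaves implicit.
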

\par

We remark that all the other summands of $M(5, \Q)$ are all trace free 
since  we already know by \cite[Theorem 1.3]{AGT} 
and \cite[Theorem 2.2]{JKTR}, respectively, that any non-trivial summand of $M(5,\Q)$ appears
in some homogeneous component of the Lie subalgebra generated by $\gr_1^\Q{\F}$.
\par

\subsection*{Acknowledgement.} The author is grateful to Professor Shigeyuki Morita for 
asking about the determinant of the Jones representation  for over ten years intermittently.

%%%%%%%%%%%%%%%%%%%%%%%%%%%%%%%%%%%%%%%%%%%%%%%%%%%%%%%%%%%%%%%%%%%%%%%%%%%
\bibliography{ref}
%%%%%%%%%%%%%%%%%%%%%%%%%%%%%%%%%%%%%%%%%%%%%%%%%%%%%%%%%%%%%%%%%%%%%%%%%%%
%
\end{document}